\title{On the interlacing of cylinder functions}
\author{T. P\'almai}
\address{Department of Theoretical Physics\\
  Budapest University of Technology and Economics\\
  H-1111 Budapest Hungary}
\email{palmai@phy.bme.hu}
\thanks{Keywords: Bessel functions, zeros of Bessel functions, interlacing.}
\subjclass[2010]{33C10}
\begin{document}

\newcommand\Real{\mathbb{R}}

\newcommand\sgn{\operatorname{sgn}}

\newtheorem{thm}{Theorem}
\newtheorem{lemma}[thm]{Lemma}
\newtheorem{cor}[thm]{Corollary}

\maketitle

\begin{abstract}
Necessary and sufficient conditions for the interlacing of the zeros of cylinder functions and their derivatives of different orders are given.
\end{abstract}

\pagestyle{myheadings}
\thispagestyle{plain}

\section{Introduction}

The zeros of the Bessel functions have been subjects of studies for more than a century; the field of applications is vast. In the monograph of Watson \cite{Watson} a number of aspects are discussed, a summary of the most important facts are listed in \cite{Abramowitz}. More recent results can be found in Refs. \cite{Elbert,Laforgia} where further references are given.
In this note I derive a new interlacing theorem for cylinder functions in the form of \emph{necessary and sufficient} conditions. The results are of primary interest from the point of view of the theory of Bessel functions; however, applications might also arise as in Refs. \cite{LZ2,PA1}, where such relations were useful in inverse scattering problems.

The general solution of the Bessel differential equation (up to a constant multiplier) is given by the cylinder function \cite{Watson}
\begin{equation}\label{eq:cyl}
C_\nu(x)\equiv J_\nu(x)\cos(\delta)-Y_\nu(x)\sin(\delta)
\end{equation}
where $J_\nu(x)$ and $Y_\nu(x)$ are the Bessel functions of the first and second kind, respectively.
Considering the Bessel differential equation, a second order linear homogeneous ODE, satisfied by the Bessel functions it is easy to see that $J_{\nu}(x)$, $Y_{\nu}(x)$ and $J'_{\nu}(x)$, $Y'_{\nu}(x)$ each has an infinity of real zeros, for any given real value of $\nu$. Furthermore, these zeros are all simple with the possible exception of $x=0$. I will use the term \emph{interlace} for two functions if between each consecutive pair of zeros of one function there is one and only one zero of the other. Denote the $s$th zero of the functions $J_{\nu}(x)$, $Y_{\nu}(x)$, $J'_{\nu}(x)$, $Y'_{\nu}(x)$, $C_\nu(x)$ and $C'_\nu(x)$ by $j_{\nu,s}$, $y_{\nu,s}$, $j'_{\nu,s}$, $y'_{\nu,s}$, $c_{\nu,s}$ and $c'_{\nu,s}$, respectively, except that $x=0$ is counted as the first zero of $J'_0(x)$ \cite{Watson}.

The following theorem summarizes some known relevant interlacing results.

\begin{thm}[\cite{Watson,Segura,LZ1,LZ2,PA1,PA2}]\label{thm:1} For $\nu\geq0$ the following points hold true.
\begin{enumerate}[\indent (a)]
\item  For $0<a\leq2$ the positive real zeros of $C_\nu(x)$ and $C_{\nu+a}(x)$ are interlaced. Similarly,  $J'_{\nu}(x)$, $J'_{\nu+b}(x)$ and $Y'_{\nu}(x)$, $Y'_{\nu+b}(x)$ are also interlaced if $0<b\leq1$, respectively.
\item If $0<c\leq1$ the inequality sequence
\begin{equation}
j'_{\nu,s}<y_{\nu,s}<y_{\nu+c,s}<y'_{\nu,s}<j_{\nu,s}<j_{\nu+c,s}<j'_{\nu,s+1}\quad s=1,2,\ldots
\end{equation}
holds. For $c>1$ this property is destroyed. We also have $\nu\leq j'_{\nu,1}$.
\end{enumerate}
\end{thm}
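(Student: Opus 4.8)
Since Theorem~\ref{thm:1} assembles facts already present in the cited literature, my plan is to organize its parts around two classical engines — the Sturm comparison theorem applied to the Liouville normal form of Bessel's equation, and the contiguous recurrence relations for cylinder functions — and to locate where each sharp threshold comes from.

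First I would pass to normal form: writing $u(x)=\sqrt{x}\,C_\nu(x)$ transforms Bessel's equation into
\begin{equation*}
u''+\Bigl(1-\frac{\nu^2-\tfrac14}{x^2}\Bigr)u=0,
\end{equation*}
whose positive zeros are exactly the $c_{\nu,s}$. The coefficient is strictly decreasing in $\nu$ for $\nu\ge0$, so the Sturm comparison theorem immediately yields that $C_\nu$ oscillates at least as fast as $C_{\nu+a}$ and that each zero $c_{\nu,s}$ is strictly increasing in $\nu$; the same, applied to $J_\nu$ and $Y_\nu$, gives the cross-order monotonicities $y_{\nu,s}<y_{\nu+c,s}$ and $j_{\nu,s}<j_{\nu+c,s}$ of part~(b). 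The purely same-order links of the chain in~(b), namely $j'_{\nu,s}<y_{\nu,s}<y'_{\nu,s}<j_{\nu,s}<j'_{\nu,s+1}$, are the classical interlacing of a cylinder function with its derivative and of the two fundamental solutions, which I would quote from Watson. The bound $\nu\le j'_{\nu,1}$ I would prove directly: at a positive zero of $J'_\nu$ Bessel's equation reduces to $J''_\nu=-(1-\nu^2/x^2)J_\nu$, and at the first maximum, where $J_\nu>0$ and $J''_\nu<0$, this forces $x>\nu$ (with equality at $\nu=0$, where $x=0$ is counted first).

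The real content is the sharpness of the thresholds $a\le2$, $b\le1$, $c\le1$, since qualitative Sturm comparison only delivers one half of interlacing (at least one zero of the faster function between consecutive zeros of the slower). To upgrade to exactly one, I would let the order increase continuously and follow the monotone motion of the zeros: interlacing can break only at a parameter value where a zero of the higher-order function coincides with a zero of the lower-order one. The contiguous relations $C'_\nu=C_{\nu-1}-(\nu/x)C_\nu=-C_{\nu+1}+(\nu/x)C_\nu$, together with the fact that $C_\nu$ and $C_{\nu+1}$ never share a positive zero — a common zero would be a double zero of $C_\nu$ by the recurrence, contradicting simplicity — forbid such coincidences until the order gap reaches the stated bound (at gap $2$ a common zero of $C_\nu$ and $C_{\nu+2}$ would in turn force one of $C_{\nu+1}$). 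This is what pins $a\le2$ for $C_\nu$ and, passing to the derivative via the same recurrences, $b\le1$ for $J'_\nu$ and $Y'_\nu$. The two delicate insertions in~(b), $y_{\nu+c,s}<y'_{\nu,s}$ and $j_{\nu+c,s}<j'_{\nu,s+1}$, are handled the same way and require $c\le1$; continuing to track the zeros past $c=1$ exhibits the first failure of these inequalities, which is the precise meaning of the property being destroyed.

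I expect the main obstacle to be exactly this sharp-threshold step. The monotonicity and the qualitative comparison are automatic, but proving that no zero of the higher order overtakes the relevant zero of the lower order throughout the full allowed range — and that it does so the instant the range is exceeded — needs either the exact no-coincidence argument above for integer gaps or a quantitative control of the zero spacing and of $\partial c_{\nu,s}/\partial\nu$ for intermediate gaps. Reconciling the integer-gap rigidity with the continuous-gap estimate, so that the single constant ($2$, or $1$ for the derivatives and for~(b)) emerges as sharp, is where the argument is genuinely delicate.
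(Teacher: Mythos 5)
The paper never proves this theorem --- it is stated as a summary of known results with citations --- so there is no internal proof to compare with; the closest in-paper machinery is Theorem~\ref{thm:cont}, Lemma~\ref{lem:trans} and Corollaries~\ref{cor:inter1}--\ref{cor}. Judged on its own terms, your sketch has a genuine gap at exactly the step you yourself identify as the real content. Your continuation argument (interlacing can only break when a zero of $C_{\nu+a}$ collides with a zero of $C_\nu$ as $a$ grows) requires such a collision to be excluded for \emph{every} $a\in(0,2]$, but your recurrence argument excludes common zeros only at the integer gaps $a=1$ and $a=2$; nothing you say prevents a coincidence at, say, $a=3/2$, and you explicitly defer that case to an unspecified ``quantitative control of the zero spacing and of $\partial c_{\nu,s}/\partial\nu$.'' That detour is unnecessary: once interlacing is known at gap $1$ (sign alternation of $C_{\nu+1}=(\nu/x)C_\nu-C'_\nu$ at consecutive zeros of $C_\nu$, and conversely via $C'_{\nu+1}=C_\nu-\frac{\nu+1}{x}C_{\nu+1}$) and at gap exactly $2$ (three-term recurrence plus transitivity, as in Lemma~\ref{lem:trans} and Corollary~\ref{cor:inter1}), every intermediate gap follows from monotonicity of the zeros in the order alone, $c_{\nu,s}<c_{\nu+\varepsilon,s}\le c_{\nu+2,s}<c_{\nu,s+1}$, which is precisely Corollary~\ref{cor}. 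Note also that this monotonicity for general $\delta$ is a consequence of Watson's formula (Theorem~\ref{thm:cont}); it is not an ``immediate'' output of Sturm comparison, since the comparison argument has no common anchoring point when $C_\nu$ blows up at the origin.

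The derivative and part-(b) claims are weaker still. ``Passing to the derivative via the same recurrences'' is not a proof: the three-term relation satisfied by $C'_\nu$, $C'_{\nu+1}$, $C'_{\nu+2}$ (Eq.~(\ref{eq:derrec})) has coefficients that change sign below $\sqrt{(\nu+1)(\nu+2)}$, so the first few zeros require a separate argument (this is exactly the work done in the proof of Corollary~\ref{lem:inter2}), and for general $\delta$ the early zeros of the derivatives need not interlace at all. Worse, your framing that each stated range is sharp (``it does so the instant the range is exceeded'') would, for the derivatives, prove something false: $b\le1$ in part (a) is sufficient but not necessary, since Theorem~\ref{THM1} extends the interlacing of $J'_\nu,J'_{\nu+b}$ and $Y'_\nu,Y'_{\nu+b}$ up to gap $2$. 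Finally, the destruction of the chain in (b) for $c>1$ is only gestured at (``continuing to track the zeros''); it needs an actual argument, e.g.\ zero asymptotics or an explicit counterexample. The Sturm-comparison half of interlacing, the same-order links quoted from Watson, and your proof of $\nu\le j'_{\nu,1}$ are fine.
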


Note that this particular formulation of the interlacing results was obtained only recently \cite{Segura,LZ1,PA2}.

A very important fact (which is a consequence of the Watson formula \cite[p. 508 Eq. (3)]{Watson}) is stated in the following theorem.

\begin{thm}\label{thm:cont}
$c_{\nu,s}$ and $c'_{\nu,s}$ are continuous increasing functions of the order $\nu>0$ for all $s=1,2,\ldots$.
\end{thm}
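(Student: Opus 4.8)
The plan is to treat continuity and monotonicity as two separate issues and to let the Watson formula \cite{Watson} supply the sign of the derivative. For continuity, the starting observation is that $C_\nu(x)=J_\nu(x)\cos\delta-Y_\nu(x)\sin\delta$ is jointly analytic in $(\nu,x)$ on $\nu>0$, $x>0$, and that for each fixed $\nu$ its positive zeros are simple, so $C_\nu'(c_{\nu,s})\neq0$. By the implicit function theorem each zero is therefore locally the graph of an analytic, hence continuous, function $\nu\mapsto c_{\nu,s}$. To promote this to a globally well-defined branch I would argue that two distinct simple zeros can never collide as $\nu$ varies (a collision would force a double zero, contradicting simplicity) and that no zero can enter or leave the positive axis through $x=0$ or $x=\infty$ on a compact $\nu$-interval; hence the increasing labelling $c_{\nu,1}<c_{\nu,2}<\cdots$ is preserved and each $c_{\nu,s}$ is continuous. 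The same reasoning applies to $C_\nu'$, whose positive zeros are simple as well (as noted in the introduction), so that $C_\nu''(c'_{\nu,s})\neq0$ and the implicit function theorem again applies.

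For the monotonicity of $c_{\nu,s}$ I would differentiate $C_\nu(c_{\nu,s})=0$ implicitly, obtaining
\begin{equation}
\frac{d c_{\nu,s}}{d\nu}=-\frac{\partial_\nu C_\nu(x)\big|_{x=c_{\nu,s}}}{C_\nu'(c_{\nu,s})},
\end{equation}
so that everything reduces to controlling the sign of $\partial_\nu C_\nu$ at a zero. Writing Bessel's equation in self-adjoint form $(xu')'+(x-\nu^2/x)u=0$ and differentiating in $\nu$ shows that $\dot u:=\partial_\nu C_\nu$ solves the same equation with source $(2\nu/x)u$; the Wronskian identity $\frac{d}{dx}\big[x(\dot u u'-u\dot u')\big]=-(2\nu/x)u^2$ then integrates to
\begin{equation}
\frac{d c_{\nu,s}}{d\nu}=\frac{2\nu}{c_{\nu,s}\,C_\nu'(c_{\nu,s})^2}\int_0^{c_{\nu,s}}\frac{C_\nu(t)^2}{t}\,dt,
\end{equation}
which is manifestly positive. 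This computation is clean for $C_\nu=J_\nu$; for a general cylinder function the $Y_\nu$-component makes the integrand behave like $t^{-2\nu-1}$ at the origin, so the integral diverges and the boundary term at $0$ no longer drops out. Here I would invoke the Watson formula \cite{Watson}, which supplies the genuinely convergent, sign-definite integral representation of $\partial_\nu c_{\nu,s}$ valid for all $\delta$, thereby handling the singularity at the origin and delivering $d c_{\nu,s}/d\nu>0$.

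The derivative zeros $c'_{\nu,s}$ are treated by the same scheme. The function $w=C_\nu'$ satisfies a second-order linear equation obtained by differentiating Bessel's equation once and eliminating $C_\nu$, and implicit differentiation of $C_\nu'(c'_{\nu,s})=0$ gives $d c'_{\nu,s}/d\nu=-\partial_\nu C_\nu'/C_\nu''$ evaluated at $c'_{\nu,s}$. I would then either run the Wronskian argument for this equation or, more economically, quote the companion Watson-type representation for $\partial_\nu c'_{\nu,s}$ to conclude positivity.

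The main obstacle is precisely the positivity for a general cylinder function rather than for $J_\nu$ alone: the naive Sturm--Liouville integral in (2) diverges because of the $Y_\nu$-singularity at $x=0$, and its regularized finite part is not visibly of one sign. Establishing the correct sign is exactly what the Watson formula accomplishes, and locating and applying its analogue for the zeros of $C_\nu'$ is the one genuinely non-routine step; the continuity statement, by contrast, is a soft consequence of analyticity together with the simplicity of the zeros.
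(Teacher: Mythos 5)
The paper itself offers no proof of this theorem: it is quoted as a known fact, attributed in one line to the Watson formula (\cite{Watson}, p.~508, Eq.~(3)), which for any positive zero $c$ of any cylinder function gives $\frac{dc}{d\nu}=2c\int_0^\infty K_0(2c\sinh t)\,e^{-2\nu t}\,dt>0$, and this single identity delivers both continuity and strict increase of $c_{\nu,s}$. Your proposal ultimately rests on exactly the same pillar, so it is in essence the paper's route, but with worthwhile scaffolding added: the implicit-function-theorem argument for continuity (simple zeros cannot collide, and by the small-$x$ behaviour $\sgn C_\nu(0+)=\sgn\sin\delta$, respectively $\sgn J_\nu(0+)=+1$, no zero can emerge through the origin for $\nu>0$) is sound, and your Sturm--Liouville computation, including the identity $\frac{d}{dx}\bigl[x(\dot u u'-u\dot u')\bigr]=-(2\nu/x)u^2$ and the resulting positive expression for $dc_{\nu,s}/d\nu$ in the case $C_\nu=J_\nu$, is correct; you also correctly diagnose why it fails for $\sin\delta\neq0$ (the $t^{-2\nu-1}$ divergence at the origin) and why Watson's formula is the right repair. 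The one genuine soft spot is the derivative zeros $c'_{\nu,s}$: the ``companion Watson-type representation for $\partial_\nu c'_{\nu,s}$'' is asserted but never exhibited, and Watson's p.~508 formula as cited does not itself cover zeros of $C'_\nu$; the Wronskian argument for the equation satisfied by $C_\nu'$ is also not routine, since the relevant Sturm--Liouville form carries the weight $(1-\nu^2/x^2)$ and the sign analysis of the boundary terms is more delicate. So for that half of the statement your proof is a citation placeholder rather than an argument --- though, to be fair, the paper leaves that half equally unargued, and the needed monotonicity results for derivative zeros do exist in the literature (e.g.\ in the surveys cited as \cite{Elbert,Laforgia}).
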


\section{Results}

The main result is formulated as follows.

\begin{thm}\label{THM1}
For positive orders $\nu$ and $\mu$ the positive zeros of
$$
C_\nu(x),\, C_\mu(x);\qquad J'_\nu(x),\,J'_\mu(x);\qquad Y'_\nu(x),\,Y'_\mu(x)
$$
are interlaced, respectively, if and only if $|\nu-\mu|\leq2$.
\end{thm}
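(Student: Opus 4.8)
The plan is to translate the combinatorial notion of interlacing into an analytic condition on a monotone phase function, and then to read off the threshold $|\nu-\mu|=2$ from the endpoint behaviour of that phase. Writing $M_\nu(x)=\sqrt{J_\nu^2(x)+Y_\nu^2(x)}$ and $\phi_\nu(x)=\arctan\!\big(Y_\nu(x)/J_\nu(x)\big)$, continued to a smooth function, one has $C_\nu(x)=M_\nu(x)\cos\big(\phi_\nu(x)+\delta\big)$, so the positive zeros of $C_\nu$ are exactly the points where $\phi_\nu(x)+\delta\in\tfrac{\pi}{2}+\pi\mathbb{Z}$; moreover the Wronskian $J_\nu Y_\nu'-J_\nu'Y_\nu=2/(\pi x)$ gives $\phi_\nu'(x)=\tfrac{2}{\pi x\,M_\nu^2(x)}>0$. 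Since $C_\nu,C_\mu$ (with the same $\delta$) interlace precisely when the numbers of their zeros in every interval $(0,x)$ differ by at most one, and since $\phi_\nu(x),\phi_\mu(x)\to-\pi/2$ as $x\to0^+$ for positive orders, interlacing is equivalent to the difference $\Delta(x):=\phi_\mu(x)-\phi_\nu(x)$ sweeping out an interval of length at most $\pi$.

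For the cylinder functions themselves, sufficiency is already contained in Theorem~\ref{thm:1}(a), so the substantive point is to locate the range of $\Delta$. First I would show $\Delta$ is monotone in $x$: from the formula for $\phi_\nu'$ this reduces to the monotonicity $M_\mu^2>M_\nu^2$ for $\mu>\nu$, which is exactly the content of the Nicholson-type integral underlying Theorem~\ref{thm:cont}. Granting this, $\Delta$ decreases monotonically from $\Delta(0^+)=0$ to $\Delta(\infty)=-(\mu-\nu)\pi/2$ (the latter from the asymptotic phase $\phi_\nu\sim x-\nu\pi/2-\pi/4$), so its range has length $(\mu-\nu)\pi/2$. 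This is $\le\pi$ iff $\mu-\nu\le2$, which yields both sufficiency and necessity for the pair $C_\nu,C_\mu$ at once.

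For the derivative pairs I would run the same scheme with the modulus $\Omega_\nu(x)=\sqrt{(J_\nu'(x))^2+(Y_\nu'(x))^2}$ and phase $\psi_\nu(x)=\arctan\!\big(Y_\nu'(x)/J_\nu'(x)\big)$; the zeros of $J_\nu'$ (resp. $Y_\nu'$) are the points with $\psi_\nu\in\tfrac{\pi}{2}+\pi\mathbb{Z}$ (resp. $\psi_\nu\in\pi\mathbb{Z}$), so both pairs are governed by the range of $\psi_\mu-\psi_\nu$ and are treated simultaneously. Differentiating and using the Bessel equation to eliminate $J_\nu'',Y_\nu''$ gives $\psi_\nu'(x)=\tfrac{2}{\pi x}\,(1-\nu^2/x^2)/\Omega_\nu^2(x)$, while the asymptotics $\psi_\nu\sim x-\nu\pi/2+\pi/4$ again force $\psi_\mu-\psi_\nu\to-(\mu-\nu)\pi/2$, so the same threshold $\mu-\nu\le2$ should emerge. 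Note that here Theorem~\ref{thm:1}(a) only delivers the case $|\nu-\mu|\le1$, so the extension to difference $2$ is genuinely new and cannot be reached by chaining two unit-difference interlacings (that would overshoot); it must come from the range computation.

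The hard part will be the monotonicity of $\psi_\mu-\psi_\nu$ in $x$. Unlike $\phi_\nu'$, the factor $(1-\nu^2/x^2)$ makes $\psi_\nu$ non-monotone, decreasing for $x<\nu$ and increasing for $x>\nu$, so I must first restrict to $x>\mu$ (legitimate since $j'_{\mu,1},y'_{\mu,1}\ge\mu$ by Theorem~\ref{thm:1}(b)) and then control the sign of $\psi_\mu'-\psi_\nu'$, which amounts to comparing $(1-\mu^2/x^2)/\Omega_\mu^2$ with $(1-\nu^2/x^2)/\Omega_\nu^2$. This rests on the $\nu$-monotonicity of the derivative modulus $\Omega_\nu^2=(J_\nu')^2+(Y_\nu')^2$, the analogue of Nicholson's formula for derivatives, together with a careful treatment of the transitional region $\nu<x<\mu$ and of the borderline difference exactly $2$, where the phase range equals $\pi$ but the limiting value $-\pi$ is only approached and never attained, so that strict interlacing is preserved.
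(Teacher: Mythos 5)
Your phase--plane treatment of the first pair is essentially correct and is a genuinely different route from the paper's: writing $C_\nu=M_\nu\cos(\phi_\nu+\delta)$ with $\phi_\nu'=\frac{2}{\pi x M_\nu^2}>0$, $\phi_\nu(0^+)=\phi_\mu(0^+)=-\pi/2$, Nicholson's integral giving $M_\mu^2>M_\nu^2$ for $\mu>\nu$, and $\phi_\mu-\phi_\nu\to-(\mu-\nu)\pi/2$, the monotone phase difference does yield interlacing of $C_\nu,C_\mu$ exactly when $|\nu-\mu|\le2$, in one stroke, whereas the paper assembles the positive part from a three-term recurrence plus a conditional transitivity lemma and the negative part from the sign of the Wronskian $W(\sqrt{x}C_\nu,\sqrt{x}C_\mu)$ at its first extremum versus at infinity. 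One caveat: your blanket statement that interlacing is equivalent to the zero counts in $(0,x)$ differing by at most one is false in general (zeros $\{1,4,5\}$ against $\{2,3\}$ violate interlacing while the counts never differ by more than one); the equivalence you actually use is legitimate only because both phases start from the same value and their difference is monotone, so the criterion should be stated in that restricted form, after the monotonicity is proved.

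The derivative half, however, contains a genuine gap. Your plan needs $\psi_\mu'(x)\le\psi_\nu'(x)$ on $(\mu,\infty)$, i.e. $(x^2-\mu^2)/\Omega_\mu^2(x)\le(x^2-\nu^2)/\Omega_\nu^2(x)$, and you propose to obtain it from ``the $\nu$-monotonicity of $\Omega_\nu^2=(J_\nu')^2+(Y_\nu')^2$, the analogue of Nicholson's formula for derivatives''. No such monotonicity is available, and it in fact goes the wrong way: from the standard asymptotics $\Omega_\nu^2(x)\sim\frac{2}{\pi x}\bigl(1-\frac{4\nu^2-3}{8x^2}+\cdots\bigr)$, and in the oscillatory regime $\Omega_\nu^2(x)\approx\frac{2\sqrt{x^2-\nu^2}}{\pi x^2}$, the derivative modulus is \emph{decreasing} in $\nu$ for fixed $x$, so $\Omega_\mu^2\ge\Omega_\nu^2$ fails and the needed two-sided comparison (smaller numerator against smaller denominator) does not follow from anything you cite; it would require its own proof. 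Moreover the region $x<\mu$ is not merely transitional: since $j_{\nu,1}',y_{\nu,1}'\ge\nu$ but can lie well below $\mu$ when $\mu-\nu$ is large, the early zeros of $J_\nu'$, $Y_\nu'$ sit where $\psi_\mu$ is still decreasing, so neither the normalization of $\psi_\mu-\psi_\nu$ at the start of the zero region nor the claim that its total sweep is $(\mu-\nu)\pi/2$ is justified, and interlacing can already break among these early zeros, which your restricted criterion does not see; consequently neither sufficiency for differences in $(1,2]$ nor necessity for differences $>2$ is established for the derivative pairs. The paper handles this part instead via the recurrence $[x^2-(\nu+1)(\nu+2)]C_\nu'+[x^2-\nu(\nu+1)]C_{\nu+2}'=\frac{2(\nu+1)}{x}[x^2-\nu(\nu+2)]C_{\nu+1}'$ combined with the transitivity lemma, the monotonicity of $j_{\nu,s}',y_{\nu,s}'$ in $\nu$, and, for the negative part, the Wronskian argument transferred through the asymptotic identification of the zeros of $C_\nu'$ with those of $C_{\nu+1}$; to salvage your route you would have to prove the comparison inequality above on $(\mu,\infty)$ and give a separate argument for the zeros below $\mu$.
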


{\it Remarks.} In general, if at least one of $\nu$ and $\mu$ is negative $C_\nu(x)$ and $C_\mu(x)$ are not interlaced on $(0,\infty)$, i.e. not all the positive real zeros are interlaced, unless the zeros are defined as continuous increasing functions of the order (see Ref. \cite{Watson}, pp. 508-510 on how the zeros disappear when the order is decreased). However, in the particular case of $\delta=0$ the interlacing of $C_\nu(x)$ and $C_\mu(x)$ is preserved for $\nu,\mu>-1$.

Additional interlacing relations can be proved with the aid of the tools introduced below, e.g. between $J_{\nu+2}(x)$ and $J_{\nu}'(x)$, but only for specific differences between the orders (which is $2$ in this particular example), and thus not in the form of Theorem \ref{THM1}.

In order to prove Theorem~\ref{THM1} two tools are utilized. The first is the conditional transitivity of interlacing relations.

\begin{lemma}\label{lem:trans}
Let $f$, $g$ and $h$ be continuous functions on some common interval $I$. Suppose $f$ is interlaced with $g$ and $g$ is interlaced with $h$ on $I$, where
\begin{equation}\label{funeq}
a(x)f(x)+b(x)g(x)+c(x)h(x)=0
\end{equation}
with some functions $a$, $b$, $c$ satisfying $\sgn a(x)=const.\neq0$, $\sgn b(x)=const.\neq0$ and $\sgn c(x)=const.\neq 0$. Then $f$ is interlaced with $h$ on $I$.
\end{lemma}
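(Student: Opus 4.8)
The plan is to let $g$ organize everything and then use \eqref{funeq} only to fix the relative order of the $f$- and $h$-zeros inside each gap of $g$. First I would set notation: let $\dots<\gamma_k<\gamma_{k+1}<\dots$ denote the zeros of $g$ in $I$, which I take to be simple sign changes (as they are in every application of the lemma). Since $f$ interlaces $g$ and $h$ interlaces $g$, each open interval $(\gamma_k,\gamma_{k+1})$ contains exactly one zero $\phi_k$ of $f$ and exactly one zero $\eta_k$ of $h$; moreover the zeros of $f$ and of $g$ strictly alternate, and likewise those of $h$ and of $g$. Evaluating \eqref{funeq} at $\phi_k$ gives $b(\phi_k)g(\phi_k)+c(\phi_k)h(\phi_k)=0$; in particular $h(\phi_k)\neq0$, for otherwise $g(\phi_k)=0$, which is impossible strictly inside a gap. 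Hence the zeros of $f$ and $h$ are distinct, and taking signs yields $\sgn h(\phi_k)=-\beta\,\sgn g(\phi_k)$ with $\beta:=\sgn\bigl(b(x)c(x)\bigr)$ constant. Symmetrically, evaluating at $\eta_k$ gives $\sgn f(\eta_k)=-\alpha\,\sgn g(\eta_k)$ with $\alpha:=\sgn\bigl(a(x)b(x)\bigr)$ constant.

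Second, I would extract a lower bound by a sign-change argument. Because $g$ reverses sign across each $\gamma_k$, the value $\sgn g(\phi_k)$ alternates with $k$, so by the first sign relation $h(\phi_k)$ and $h(\phi_{k+1})$ have opposite signs; by the intermediate value theorem $h$ has at least one zero in each interval $(\phi_k,\phi_{k+1})$ between consecutive $f$-zeros. Running the identical computation with $f$ and $h$ interchanged (this is where the constant sign of $a$ enters), $f$ has at least one zero between any two consecutive $h$-zeros.

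Third comes the upper bound, which I expect to be the crux. The interval $(\phi_k,\phi_{k+1})$ meets only the two $g$-gaps $(\gamma_k,\gamma_{k+1})$ and $(\gamma_{k+1},\gamma_{k+2})$, so it can contain at most the two $h$-zeros $\eta_k$ and $\eta_{k+1}$. Suppose for contradiction it contained both; then $\phi_k<\eta_k$ and $\eta_{k+1}<\phi_{k+1}$, and therefore the interval $(\eta_k,\eta_{k+1})$ between two consecutive $h$-zeros would contain no zero of $f$ at all, since the only candidates $\phi_k,\phi_{k+1}$ lie outside it. This contradicts the symmetric lower bound of the previous step. Hence each $(\phi_k,\phi_{k+1})$ contains exactly one zero of $h$ and, interchanging roles, each interval between consecutive $h$-zeros contains exactly one zero of $f$, which is precisely the statement that $f$ and $h$ interlace on $I$.

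The main obstacle is exactly this uniqueness step, and it explains why all three coefficients must have constant sign. A purely local analysis inside a single gap is inconclusive: both orderings $\phi_k<\eta_k$ and $\phi_k>\eta_k$ are individually consistent with the sign relations, so one cannot settle the order gap by gap in isolation. What forces the orderings to be mutually compatible across gaps is the global alternation of $\sgn g$ combined with the symmetric reuse of \eqref{funeq}, i.e.\ pairing the $h$-count with the $f$-count to derive the contradiction.
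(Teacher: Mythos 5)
Your proposal is correct and follows essentially the same route as the paper: you evaluate the identity \eqref{funeq} at the zeros of $f$ and of $h$ to get $\sgn h(\phi_k)=-\sgn(bc)\,\sgn g(\phi_k)$ and $\sgn f(\eta_k)=-\sgn(ab)\,\sgn g(\eta_k)$, and then use the alternation of $\sgn g$ (forced by the assumed interlacings) together with the intermediate value theorem and a counting argument. Your only addition is that you organize the bookkeeping by the gaps of $g$ and spell out the uniqueness (at most one zero) step explicitly, which the paper leaves implicit in the phrase ``odd number of zeros \dots implying the two are interlaced.''
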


The second tool is a result connecting Wronskians and interlacing.

\begin{lemma}\label{lem:WR}
The Wronskian $W\left(\sqrt{x}C_\nu(x),\sqrt{x}\bar C_\mu(x)\right)$ has no roots on the interval $x\in(\min (c_{\nu,1},\bar{c}_{\mu,1}),\infty)$ if and only if the positive zeros of the functions $C_\nu(x)$ and $\bar{C}_\mu(x)$ are interlaced.
\end{lemma}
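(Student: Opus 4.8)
The plan is to pass to the Liouville normal form of the Bessel equation. Writing $u(x)=\sqrt{x}\,C_\nu(x)$ and $\bar u(x)=\sqrt{x}\,\bar C_\mu(x)$, a short computation shows that these satisfy
$$u''+\Big(1-\frac{\nu^2-\tfrac14}{x^2}\Big)u=0,\qquad \bar u''+\Big(1-\frac{\mu^2-\tfrac14}{x^2}\Big)\bar u=0$$
on $(0,\infty)$. Since $\sqrt{x}>0$ there, the positive zeros of $u$ coincide with those of $C_\nu$ and remain simple, and likewise for $\bar u$; thus the interlacing assertion is unchanged by this substitution and $W(\sqrt{x}C_\nu,\sqrt{x}\bar C_\mu)=W(u,\bar u)$.

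The crucial identity is obtained by differentiating $W=u\bar u'-u'\bar u$ and using both normal-form equations to eliminate the second derivatives:
$$W'(x)=u\bar u''-u''\bar u=\frac{\mu^2-\nu^2}{x^2}\,u(x)\bar u(x).$$
Hence the factor $\tfrac{\mu^2-\nu^2}{x^2}$ multiplying $u\bar u$ has constant sign on $(0,\infty)$ whenever $\nu\neq\mu$ (while for $\nu=\mu$ one has $W'\equiv0$, so $W$ is constant and the situation is classical). First I would record that $W$ does not vanish at any zero of $u$ or of $\bar u$: at a zero $x_0$ of $u$ one has $W(x_0)=-u'(x_0)\bar u(x_0)$, which is nonzero because the zero is simple and $u,\bar u$ have no common zero; a common zero would make both the interlacing and the nonvanishing of $W$ fail, so the stated equivalence holds trivially there. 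Next, on each open interval between two consecutive members of the combined, sorted list of zeros of $u$ and $\bar u$, the product $u\bar u$ keeps a constant sign, so by the identity $W$ is strictly monotone there; consequently $W$ vanishes inside such an interval if and only if it takes opposite signs at the two endpoints.

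It then remains to translate sign changes of $W$ into the combinatorics of the zeros. Denoting the sorted positive zeros by $z_1<z_2<\cdots$ with $z_1=\min(c_{\nu,1},\bar c_{\mu,1})$, I would carry out the elementary sign bookkeeping at two consecutive $z_k$, reading off $\sgn u'$ and $\sgn\bar u'$ from the side on which $u$, respectively $\bar u$, is nonzero. One finds that $W$ has the same sign at $z_k$ and $z_{k+1}$ precisely when these zeros belong to different functions, and opposite signs when they belong to the same function. Therefore $\sgn W(z_k)$ is constant in $k$ exactly when the combined zeros strictly alternate, i.e.\ exactly when the zeros of $C_\nu$ and $\bar C_\mu$ interlace; by the monotonicity step this is in turn equivalent to $W$ having no root on $(z_1,\infty)$, which is the claim.

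The main obstacle I anticipate is this final sign analysis together with the behaviour at the left endpoint: one must verify that starting at the first combined zero $z_1$ rather than at $0$ is exactly right, since $W$ may well vanish to the left of $z_1$ without affecting the interlacing of the positive zeros, and one must dispose of the degenerate possibility of a shared first zero. The monotonicity identity does the heavy lifting, but matching ``sign change of $W$'' to ``failure of alternation'' cleanly requires care in the case distinctions.
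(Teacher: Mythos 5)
Your proposal is correct and takes essentially the same route as the paper: both rest on the identity $W'_{\xi_\nu,\bar{\xi}_\mu}(x)=\frac{\mu^2-\nu^2}{x^2}\xi_\nu(x)\bar{\xi}_\mu(x)$, which places the critical points of the Wronskian exactly at the combined zeros of $C_\nu$ and $\bar C_\mu$, followed by a sign analysis of $W$ at those points. If anything, your monotonicity-between-consecutive-zeros bookkeeping makes the converse direction (failure of interlacing forces a sign change, hence a root, of $W$) more explicit than the paper, which disposes of it with a one-line ``it is apparent'' remark.
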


\section{Proofs}

\subsection{Three term recurrence relations}

\begin{proof}[Proof of Lemma~\ref{lem:trans}]
Let $\{x_i\}$ and $\{y_i\}$ denote the sets of zeros of $f$ and $h$ on $I$, respectively. Then the functional equation (\ref{funeq}) yields $\sgn g(x_i)=-\sgn(bc)\sgn h(x_i)$ and $\sgn f(y_i)=-\sgn(ab)$ $\sgn g(y_i)$. Since $f$ and $g$ are interlaced we have $\sgn g(x_i)=-\sgn g(x_{i+1})$, similarly $\sgn g(y_i)=-\sgn g(y_{i+1})$. Then $h$ ($f$) must have an odd number of zeros between each consecutive pair of zeros of $f$ ($h$) implying the two are interlaced on the interval $I$.
\end{proof}

We prove two interlacing relations using Lemma~\ref{lem:trans}.

\begin{cor}\label{cor:inter1}
For $\nu>0$ the positive zeros of $C_\nu(x)$ and $C_{\nu+2}(x)$ are interlaced.
\end{cor}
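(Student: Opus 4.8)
The plan is to apply Lemma~\ref{lem:trans} (conditional transitivity) with the chain $f = C_\nu$, $g = C_{\nu+1}$, $h = C_{\nu+2}$. The two hypothesized interlacing relations come for free from Theorem~\ref{thm:1}(a): the positive zeros of $C_\nu$ and $C_{\nu+1}$ interlace (step size $a=1 \le 2$), and likewise those of $C_{\nu+1}$ and $C_{\nu+2}$ interlace. Both hold since $\nu \ge 0$ and $\nu+1 \ge 0$. So the entire burden is to produce a functional equation of the form \eqref{funeq} relating $C_\nu$, $C_{\nu+1}$, $C_{\nu+2}$ with coefficient functions whose signs are constant and nonzero on the relevant interval.

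For this I would invoke the standard three-term recurrence for cylinder functions (the section heading ``Three term recurrence relations'' signals exactly this). The recurrence reads
\begin{equation}\label{eq:3term}
C_{\nu+2}(x) = \frac{2(\nu+1)}{x}\,C_{\nu+1}(x) - C_\nu(x),
\end{equation}
which holds because each of $J_\nu$ and $Y_\nu$ satisfies it and $C_\nu$ is a fixed linear combination \eqref{eq:cyl} with $\nu$-independent coefficients $\cos\delta$, $\sin\delta$. Rewriting \eqref{eq:3term} as
$$
C_\nu(x) + \frac{-2(\nu+1)}{x}\,C_{\nu+1}(x) + C_{\nu+2}(x) = 0,
$$
I read off $a(x)=1$, $b(x)=-2(\nu+1)/x$, and $c(x)=1$. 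On the interval $I=(0,\infty)$, with $\nu>0$, these satisfy $\sgn a = \sgn c = +1$ and $\sgn b = -1$ identically, all constant and nonzero, exactly as Lemma~\ref{lem:trans} requires.

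The remaining point is a bookkeeping check on the interval: Lemma~\ref{lem:trans} needs $f$, $g$, $h$ all continuous on a common interval $I$, and the interlacing hypotheses must hold there. Taking $I=(0,\infty)$, all three cylinder functions are continuous, and the recurrence holds throughout. The one subtlety is that Theorem~\ref{thm:1}(a) is phrased for \emph{positive} zeros, so I would note that the relevant interlacing is of the positive zeros and that $b(x)$ has a singularity only at $x=0$, which lies outside $(0,\infty)$, so $\sgn b$ is genuinely constant on $I$. I expect no real obstacle here; the main (and only) thing to get right is the algebraic direction of the recurrence so that all three sign conditions come out constant and nonzero, after which Lemma~\ref{lem:trans} delivers the interlacing of $C_\nu$ and $C_{\nu+2}$ immediately.
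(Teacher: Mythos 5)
Your proposal is correct and follows essentially the same route as the paper: it applies Lemma~\ref{lem:trans} with $f=C_\nu$, $g=C_{\nu+1}$, $h=C_{\nu+2}$ on $I=(0,\infty)$, using the three-term recurrence $C_\nu(x)-\frac{2(\nu+1)}{x}C_{\nu+1}(x)+C_{\nu+2}(x)=0$ and the interlacing of consecutive orders from Theorem~\ref{thm:1}(a). The only difference is that you spell out the sign checks on $a$, $b$, $c$ and the source of the two interlacing hypotheses, which the paper leaves implicit.
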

\begin{proof}
Indeed, Lemma~\ref{lem:trans} yields the statement, since with $I=(0,\infty)$, $f=C_\nu$, $g=C_{\nu+1}$ and $h=C_{\nu+2}$ Eq. (\ref{funeq}) can be turned into
\begin{equation}
C_\nu(x)-\frac{2\nu+2}{x}C_{\nu+1}(x)+C_{\nu+2}(x)=0,
\end{equation}
which is a known three term recurrence relation.
\end{proof}

For the derivative functions a suitable three term recurrence relation can be found using the well-known ones \cite{Abramowitz}. From
\begin{align}
&C'_\nu(x)=-C_{\nu+1}(x)+\frac{\nu}{x}\,C_\nu(x),\qquad
&C'_{\nu+1}(x)=C_{\nu}(x)-C_{\nu+2}(x),\\
&C'_{\nu+1}(x)=C_{\nu}(x)-\frac{\nu+1}{x}\,C_{\nu+1}(x),\qquad
&C'_{\nu+2}(x)=C_{\nu+1}(x)-\frac{\nu+2}{x}\,C_{\nu+2}(x)
\end{align}
we infer that
\begin{equation}\label{eq:derrec}
[x^2-(\nu+1)(\nu+2)]C'_{\nu}(x)+[x^2-\nu(\nu+1)]C'_{\nu+2}(x)=\frac{2(\nu+1)}{x}[x^2-\nu(\nu+2)]C'_{\nu+1}(x)
\end{equation}
holds.

The first zero of $C'_\nu(x)$ can be at any point of the half line $(0,\infty)$ depending on $\nu$ and $\delta$. Eq. (\ref{eq:derrec}) implies that the first few zeros of $C'_\nu(x)$ and $C'_{\nu+2}(x)$ may not be interlaced even if $C'_\nu(x)$ and $C'_{\nu+1}(x)$ are interlaced. For $x>\sqrt{(\nu+1)(\nu+2)}$ $C_\nu'(x)$ and $C_{\nu+2}'(x)$ are interlaced if $C'_\nu(x)$ and $C'_{\nu+1}(x)$ are interlaced. However, the first few zeros of $C_\nu'(x)$ and $C_{\nu+1}'(x)$ might still not be interlaced. One can only guarantee interlacing of the derivative functions $C_\nu'(x)$, $C_{\nu+1}'(x)$ and $C_{\nu+2}'(x)$ if $\delta=0$ or $\delta=\frac{\pi}{2}$.

\begin{cor}\label{lem:inter2}
The positive zeros of $J'_\nu(x)$ and $J'_{\nu+2}(x)$ and those of  $Y'_\nu(x)$ and $Y'_{\nu+2}(x)$ are interlaced if $\nu>0$.
\end{cor}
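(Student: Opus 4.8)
The plan is to prove both statements at once, exploiting that the recurrence (\ref{eq:derrec}) and Theorem~\ref{thm:1}(a) hold for $C=J$ and for $C=Y$ alike; I write $C'$ for either $J'$ or $Y'$ and $c'_{\nu,s}$ for the corresponding zeros. First I would fix the interval $I=(\sqrt{(\nu+1)(\nu+2)},\infty)$. On $I$ the three coefficients in (\ref{eq:derrec}), namely $x^2-(\nu+1)(\nu+2)$, $x^2-\nu(\nu+1)$ and $\frac{2(\nu+1)}{x}[x^2-\nu(\nu+2)]$, are simultaneously positive, because $\sqrt{(\nu+1)(\nu+2)}$ is the largest of the three abscissae $\sqrt{\nu(\nu+1)}<\sqrt{\nu(\nu+2)}<\sqrt{(\nu+1)(\nu+2)}$ at which they vanish. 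Since Theorem~\ref{thm:1}(a) with $b=1$ already gives that $C'_\nu,C'_{\nu+1}$ and $C'_{\nu+1},C'_{\nu+2}$ interlace on $(0,\infty)$, Lemma~\ref{lem:trans} applied with $(f,g,h)=(C'_\nu,C'_{\nu+1},C'_{\nu+2})$ on $I$ yields interlacing of $C'_\nu$ and $C'_{\nu+2}$ on $I$. This disposes of all but finitely many zeros.

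The remaining work is to push the interlacing down from $I$ to all of $(0,\infty)$, and this is where the Bessel-specific structure enters. By Theorem~\ref{thm:1}(b) one has $\nu\le j'_{\nu,1}$, and the same inequality sequence gives $y'_{\nu,1}>y_{\nu,1}>j'_{\nu,1}\ge\nu$; hence $c'_{\nu+2,1}>\nu+2>\sqrt{(\nu+1)(\nu+2)}$, so $C'_{\nu+2}$ has no zero on $(0,\sqrt{(\nu+1)(\nu+2)}\,]$. The crucial point is to show that $C'_\nu$ has \emph{at most one} zero there, i.e. that $c'_{\nu,2}>\sqrt{(\nu+1)(\nu+2)}$; note this cannot be relaxed, since two consecutive zeros of $C'_\nu$ below the first zero of $C'_{\nu+2}$ would break interlacing outright. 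Here I would invoke Theorem~\ref{thm:1}(b) once more, which gives $c'_{\nu,2}>j_{\nu,1}$, and reduce the claim to the lower bound $j_{\nu,1}\ge\sqrt{(\nu+1)(\nu+2)}$ for the smallest zero of $J_\nu$. I expect this inequality to be the main obstacle: it is not part of Theorem~\ref{thm:1} and must be supplied separately, e.g. from the Rayleigh sums $\sum_s j_{\nu,s}^{-2}=1/[4(\nu+1)]$ together with their higher-order refinements, or by a Sturm comparison in the normal form of Bessel's equation; the margin $j_{\nu,1}-\sqrt{(\nu+1)(\nu+2)}$ is positive and increasing in $\nu$, so the bound should hold for all $\nu>0$.

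Finally I would stitch the two regions together. Theorem~\ref{thm:cont} gives $c'_{\nu,s}<c'_{\nu+2,s}$ for every $s$, in particular $c'_{\nu,1}<c'_{\nu+2,1}$. If $C'_\nu$ has no zero on $(0,\sqrt{(\nu+1)(\nu+2)}\,]$, then all zeros of both functions lie in $I$, and the interlacing there together with $c'_{\nu,1}<c'_{\nu+2,1}$ is exactly the global statement. If instead $C'_\nu$ has its single small zero $c'_{\nu,1}$ below $\sqrt{(\nu+1)(\nu+2)}$, I must still check that the alternation on $I$ begins with $c'_{\nu+2,1}$ rather than with $c'_{\nu,2}$, equivalently that $c'_{\nu+2,1}<c'_{\nu,2}$. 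Here the adjacent interlacings are powerless, since both $c'_{\nu+2,1}$ and $c'_{\nu,2}$ lie in the same gap $(c'_{\nu+1,1},c'_{\nu+1,2})$; so I would pin the pattern from infinity. McMahon's asymptotics give $c'_{\nu+2,s}<c'_{\nu,s+1}$ for all large $s$, and since the interlacing already established on $I$ is a single rigid alternation of the two zero sequences, this ordering propagates to the bottom, forcing $c'_{\nu+2,1}<c'_{\nu,2}$. Combined with $c'_{\nu,s}<c'_{\nu+2,s}$ this gives the full interlacing on $(0,\infty)$ for both $J'$ and $Y'$.
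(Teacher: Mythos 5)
Your plan follows the paper's skeleton (positivity of the coefficients in (\ref{eq:derrec}) plus Lemma~\ref{lem:trans} gives interlacing away from the origin, and the remaining issue is the first few zeros), but it has a genuine gap at exactly the point you flag: the inequality $j_{\nu,1}\ge\sqrt{(\nu+1)(\nu+2)}$ is never proved, and nothing in the paper supplies it. Moreover, the route you sketch does not deliver it: the first Rayleigh sum gives $j_{\nu,1}^2>4(\nu+1)$, which suffices only for $\nu\le 2$; the second gives $j_{\nu,1}^2>4(\nu+1)\sqrt{\nu+2}$, which suffices only for $\nu\le 14$; and in general the $k$-th Rayleigh sum yields a lower bound for $j_{\nu,1}^2$ of order $\nu^{2-1/k}$, so \emph{every} fixed-order sum eventually falls below $(\nu+1)(\nu+2)\sim\nu^2$. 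The inequality is in fact true, but for all $\nu>0$ it needs a genuinely stronger input (e.g.\ the uniform bound $j_{\nu,1}>\nu+1.8557\,\nu^{1/3}$), i.e.\ a nontrivial citation or argument that your proposal leaves open. A second, smaller issue is the stitching step: for an order gap of exactly $2$ the leading McMahon terms of $c'_{\nu+2,s}$ and $c'_{\nu,s+1}$ coincide (the phase shift is exactly $\pi$), so the leading-order asymptotics (\ref{eq:Basy}) cannot separate them; your claim $c'_{\nu+2,s}<c'_{\nu,s+1}$ requires the next-order correction term $-\frac{4\nu^2+3}{8\beta}$, which is fine but must be invoked explicitly, and the ``propagation'' of the asymptotic ordering down the alternation also tacitly uses that $c'_{\nu,2}$ already lies in $I$, i.e.\ it depends on the unproved bound above.

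The paper closes both of these holes with one local sign argument instead of global estimates. It works on $(j'_{\nu+2,1},\infty)$, where positivity of all three coefficients in (\ref{eq:derrec}) follows from $j'_{\nu+2,1}\ge\nu+2$ (Theorem~\ref{thm:1}(b)), and then shows directly that $j'_{\nu,2}>j'_{\nu+2,1}$: by Theorem~\ref{thm:1}(a) $j'_{\nu,2}\in(j'_{\nu+1,1},j'_{\nu+1,2})$, so if one had $j'_{\nu,2}<j'_{\nu+2,1}$, evaluating (\ref{eq:derrec}) at $x=j'_{\nu,2}$ (where $x>j'_{\nu+1,1}\ge\nu+1$ makes the brackets positive) would equate a positive left-hand side, since $J'_{\nu+2}>0$ before its first zero, with a negative right-hand side, since $J'_{\nu+1}<0$ on $(j'_{\nu+1,1},j'_{\nu+1,2})$ --- a contradiction. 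This single step simultaneously yields your ``at most one small zero of $C'_\nu$'' claim and the ordering $c'_{\nu+2,1}<c'_{\nu,2}$, with no bound on $j_{\nu,1}$ and no asymptotics of zeros. If you replace your second and third paragraphs by this sign analysis, your argument becomes complete and coincides with the paper's.
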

\begin{proof}
The multiplying terms in Eq. (\ref{eq:derrec}) are all positive for $x>j'_{\nu+2,1}$ thus Lemma~\ref{lem:trans} yields that $J'_\nu$ and $J'_{\nu+2}$ are interlaced on $(j'_{\nu+2,1},\infty)$ since $J'_\nu$, $J'_{\nu+1}$ and $J'_{\nu+1}$, $J'_{\nu+2}$ are interlaced (Theorem~\ref{thm:1}). It remains to show that $J'_\nu$ has only one zero ($j'_{\nu,1}$) before $j'_{\nu+2,1}$. We have $j'_{\nu,1}<j'_{\nu+1,1}<j'_{\nu+2,1}$ from Theorem~\ref{thm:cont}, while Theorem~\ref{thm:1} implies one further zero ($j'_{\nu,2}$) on $(j'_{\nu+1,1},j'_{\nu+1,2})$. Analyzing the signs in Eq. (\ref{eq:derrec}) yields that this zero must be after $j'_{\nu+2,1}$.

The same reasoning holds for the second order derivative functions.
\end{proof}

The following is a simple corollary of Theorem~\ref{thm:cont}.

\begin{cor}\label{cor}
If $\nu>0$ then the previous interlacing relations remain to be true if the difference between the orders is $\varepsilon$ instead of $2$ with $0<\varepsilon\leq2$.
\end{cor}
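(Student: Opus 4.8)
The plan is to reduce the general statement to the already-settled case $\varepsilon=2$ (Corollaries~\ref{cor:inter1} and~\ref{lem:inter2}) by exploiting the monotonicity of the zeros in the order supplied by Theorem~\ref{thm:cont}. The guiding idea is that the interlacing of two indexed families of zeros is equivalent to a single chain of inequalities, and monotonicity lets me "squeeze" the zeros of the intermediate order $\nu+\varepsilon$ inside that chain.

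First I would record the interlacing of $C_\nu$ and $C_{\nu+2}$ in the explicit ordered form
$$
c_{\nu,s}<c_{\nu+2,s}<c_{\nu,s+1},\qquad s=1,2,\ldots,
$$
which states precisely that exactly one zero of $C_{\nu+2}$ lies in each gap $(c_{\nu,s},c_{\nu,s+1})$. The ordering $c_{\nu,1}<c_{\nu+2,1}$ that fixes the direction of the chain is itself a consequence of Theorem~\ref{thm:cont}, since $c_{\nu,1}$ is increasing in the order.

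Next, for any fixed $0<\varepsilon\le2$ I would apply Theorem~\ref{thm:cont} at each fixed index $s$: strict monotonicity in the order gives $c_{\nu,s}<c_{\nu+\varepsilon,s}<c_{\nu+2,s}$. Feeding the upper bound into the base chain yields $c_{\nu+\varepsilon,s}<c_{\nu+2,s}<c_{\nu,s+1}$, and combining with the lower bound produces $c_{\nu,s}<c_{\nu+\varepsilon,s}<c_{\nu,s+1}$ for every $s$. This is exactly the interlacing chain for $C_\nu$ and $C_{\nu+\varepsilon}$: applying the lower bound at index $s+1$ shows that $c_{\nu+\varepsilon,s}$ is the \emph{only} zero of $C_{\nu+\varepsilon}$ in $(c_{\nu,s},c_{\nu,s+1})$, so the count in each gap is exactly one. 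The boundary value $\varepsilon=2$ is just the base case, so the whole range $0<\varepsilon\le2$ is covered.

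Finally, since $J'_\nu$ and $Y'_\nu$ are the cylinder-derivative functions corresponding to $\delta=0$ and $\delta=\pi/2$, their zeros $j'_{\nu,s}$ and $y'_{\nu,s}$ are likewise continuous and increasing in $\nu$ by Theorem~\ref{thm:cont}; the identical squeeze, started from Corollary~\ref{lem:inter2}, then gives the interlacing of $J'_\nu,J'_{\nu+\varepsilon}$ and of $Y'_\nu,Y'_{\nu+\varepsilon}$. The only point that genuinely demands care is the strictness of the monotonicity, which is needed to keep all inequalities strict and hence to pin the count down to exactly one zero per gap; but this is not a real obstacle, as it is precisely what Theorem~\ref{thm:cont} delivers, and once monotonicity is in hand the argument is essentially immediate.
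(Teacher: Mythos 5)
Your squeeze argument is correct and is precisely the reasoning the paper has in mind: it states Corollary~\ref{cor} without proof as ``a simple corollary of Theorem~\ref{thm:cont}'', i.e.\ strict monotonicity of $c_{\nu,s}$ and $c'_{\nu,s}$ in the order combined with the $\varepsilon=2$ cases of Corollaries~\ref{cor:inter1} and~\ref{lem:inter2} gives $c_{\nu,s}<c_{\nu+\varepsilon,s}\leq c_{\nu+2,s}<c_{\nu,s+1}$, exactly your chain. Your handling of $J'_\nu$ and $Y'_\nu$ via $\delta=0,\pi/2$ and the remark on strictness are also consistent with the paper, so nothing further is needed.
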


\subsection{Wronskians}

To prove the negative parts of Theorem~\ref{THM1} I analyze Wronskians as in \cite{PA1}. Let
\begin{align}
&\xi_\nu=\sqrt{x}C_\nu(x)=\sqrt{x}[\cos\delta J_\nu(x)-\sin\delta Y_\nu(x)],\\ &\bar{\xi}_\mu=\sqrt{x}\bar{C}_\mu(x)=\sqrt{x}[\cos\bar\delta J_\mu(x)-\sin\bar\delta Y_\mu(x)],
\end{align}
which functions give rise to the Wronskian
\begin{equation}
W\left(\sqrt{x}C_\nu(x),\sqrt{x}\bar C_\mu(x)\right)\equiv W_{\xi_\nu,\bar{\xi}_\mu}(x)=\xi_\nu(x)\bar{\xi}'_{\mu}(x)-\xi'_{\nu}(x)\bar{\xi}_{\mu}(x).
\end{equation}
Differentiating with respect to $x$ one obtains
\begin{equation}
W'_{\xi_\nu,\bar{\xi}_\mu}(x)=\frac{\mu^2-\nu^2}{x^2}\xi_\nu(x)\bar{\xi}_\mu(x),\label{derW}
\end{equation}
which holds because of the differential equation
\begin{equation}
x^2\left[\frac{d^2}{dx^2}+1\right]\xi_\nu(x)=\left(\nu^2-\frac{1}{4}\right)\xi_\nu(x),
\end{equation}
inferred from the Bessel equation.

From Eq. (\ref{derW}) follows that the set of local extrema of $W_{\xi_\nu,\bar{\xi}_\mu}(x)$ is $\{w_{\nu\mu,s}\}_{s=1}^\infty=\{c_{\nu,s}\}_{s=1}^\infty\cup\{\bar{c}_{\mu,s}\}_{s=1}^\infty$. At these positions the Wronskian takes
\begin{equation}\label{extrW}
\text{extr}_s W_{\xi_\nu,\bar{\xi}_\mu}(x)\equiv W_{\xi_\nu,\bar{\xi}_\mu}(w_{\nu\mu,s})=\begin{cases}-\xi'_{\nu}(c_{\nu,t})\bar{\xi}_{\mu}(c_{\nu,t})\\ +\xi_\nu(\bar{c}_{\mu,t})\bar{\xi}'_{\mu}(\bar{c}_{\mu,t}),\end{cases}
\end{equation}
where the exact value of $t$ depends on the interlacing of $C_\nu(x)$ and $\bar{C}_\mu(x)$.

Now we are ready to prove Lemma~\ref{lem:WR}.

\begin{proof}[Proof of Lemma~\ref{lem:WR}]
Let $\bar{c}_{\mu,1}<c_{\nu,1}$. Without loss of generality suppose $\sgn C_\nu(0+)$ $=\sgn C_\mu(0+)=1$ (otherwise to satisfy the equation we take the opposite of the respective functions, whose zeros coincide with the original ones).
Please note that $\sgn C'_\nu(c_{\nu,n})=(-1)^n$.

Suppose the zeros of $C_\nu(x)$ and $C_\mu(x)$ are interlaced implying $\sgn C_\nu(\bar c_{\mu,n})=(-1)^{n+1}$ and $\sgn\bar C_\mu( c_{\nu,n})=(-1)^{n}$. Then every odd (even) numbered extremum is at a zero of $\bar C_\mu(x)$ ($C_\nu(x)$). From Eq. (\ref{extrW}) and the signs of the constituent functions it follows that
\begin{equation}
\sgn\text{extr}_n W_{\xi_\nu,\bar\xi_\mu}(x)=-1
\end{equation}
independent of $n$ implying for $W_{\xi_\nu,\bar\xi_\mu}(x)$ no zeros on $(\min (c_{\nu,1},\bar{c}_{\mu,1}),\infty)$.

Since $\{w_{\nu\mu,s}\}_{s=1}^\infty=\{c_{\nu,s}\}_{s=1}^\infty\cup\{\bar{c}_{\mu,s}\}_{s=1}^\infty$ it is apparent that the converse of the statement is true as well.
\end{proof}

This lemma will now be used to derive the breaking conditions (negative parts) for Theorem~\ref{THM1}.

In what follows I will use some asymptotic properties of the Bessel functions. From the definitions of $J_\nu(x)$ and $Y_\nu(x)$, i.e.
\begin{equation}
J_\nu(x)=\sum_{m=0}^\infty\frac{(-1)^m}{m!\ \Gamma(m+\nu+1)}\left(\frac{x}{2}\right)^{2m+\nu},\qquad Y_\nu(x)=\frac{J_\nu(x)\cos(\nu\pi)-J_{-\nu}(x)}{\sin(\nu\pi)}
\end{equation}
it is inferred that for $\nu>0$
\begin{equation}\label{C0}
C_\nu(x)=\sin\delta\left(\frac{\Gamma(\nu)2^\nu}{\pi}+o(1)\right)x^{-\nu},\qquad x\to0.
\end{equation}

The asymptotics of the Wronskian can be derived from the asymptotics of the Bessel functions, namely
\begin{align}\label{eq:Basy}
J_\nu(x)&=\sqrt{\frac{2}{\pi x}}\cos \left(x-\frac{\nu\pi}{2}-\frac{\pi}{4}\right)+o(1),\qquad x\to\infty,\\
Y_\nu(x)&=\sqrt{\frac{2}{\pi x}}\sin \left(x-\frac{\nu\pi}{2}-\frac{\pi}{4}\right)+o(1),\qquad x\to\infty,
\end{align}
therefore
\begin{equation}\label{eq:Was}
W_{\xi_\nu,\bar{\xi}_\mu}(x)=\frac{2}{\pi}\sin\left(\frac{\mu-\nu}{2}\pi+\delta-\bar\delta\right)+o(1),\qquad x\to\infty
\end{equation}
meaning that the Wronskian converges to a constant at infinity.

\begin{lemma}\label{lem:BD}
Let $\nu,\ \mu>0$. Then the interlacing of $C_\nu(x)$ and $\bar C_{\mu}(x)$ breaks down in the following cases:
\begin{align*}
a)&\bar{C}_\mu(x)\equiv C_\mu(x) \text{ with } |\nu-\mu|>2;\\
b)&\quad C_\nu(x)\equiv J_\nu(x)\text{ and }\bar{C}_\mu(x)\equiv Y_\mu(x) \text{ with } |\nu-\mu|>1\text{ provided that } y_{\mu,1}<j_{\nu,1}.
\end{align*}
\end{lemma}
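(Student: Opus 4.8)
The plan is to use Lemma~\ref{lem:WR} in the contrapositive: to prove that interlacing fails it suffices to exhibit a single zero of the Wronskian $W_{\xi_\nu,\bar\xi_\mu}(x)$ on $(\min(c_{\nu,1},\bar c_{\mu,1}),\infty)$. By \eqref{derW} the extrema of $W$ sit exactly at the zeros $c_{\nu,s}$ and $\bar c_{\mu,s}$, and by \eqref{extrW} the value of $W$ at such an extremum is $\mp$ a product of a function value and a derivative value. Since the derivative of a cylinder function alternates in sign at consecutive zeros, while the other function keeps its sign across a gap containing none of its own zeros, two consecutive extrema sitting at zeros of the \emph{same} function must have opposite signs; $W$ then changes sign between them and has a root there. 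Thus the whole problem reduces to exhibiting, somewhere in the interval, two consecutive zeros of one of the two functions with no zero of the other between them --- a local failure of interlacing.

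To locate such a double gap I would pass to the zero-counting functions. From the large-$x$ asymptotics \eqref{eq:Basy} the $s$-th zeros form essentially arithmetic sequences of spacing $\pi$, so the counting functions satisfy $N_{C_\nu}(x)-N_{\bar C_\mu}(x)=\tfrac{\mu-\nu}{2}+c_0+o(1)$, with $c_0=0$ in case a) and $c_0=-\tfrac12$ in case b) (the $\pm\tfrac14,\pm\tfrac34$ phase constants of $J$ and $Y$ producing the shift). The left side is integer-valued, hence attains every integer strictly between $0$ and its limiting value; in particular, whenever $\bigl|\tfrac{\mu-\nu}{2}+c_0\bigr|>1$ it reaches absolute value $\ge 2$ at arbitrarily large $x$, which is precisely a double gap and therefore a zero of $W$ in the interval. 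In case a) the threshold $\bigl|\tfrac{\mu-\nu}{2}\bigr|>1$ is exactly $|\nu-\mu|>2$, so this settles case a) at once.

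For case b) the same computation forces a breakdown as soon as $\tfrac{\mu-\nu}{2}-\tfrac12<-1$, i.e.\ $\nu-\mu>1$, which is the regime singled out by the hypothesis $y_{\mu,1}<j_{\nu,1}$ (the low-order $Y_\mu$ begins producing zeros before $J_\nu$, so the count runs negative). Here the orientation hypothesis does double duty: it fixes which first zero is the left endpoint of the interval and, via the normalization $\sgn C_\nu(0^+)=\sgn\bar C_\mu(0^+)$ used in the proof of Lemma~\ref{lem:WR} together with \eqref{C0}, it pins down the sign of $W$ at the first extremum. The limit $L=\tfrac{2}{\pi}\sin\bigl(\tfrac{\mu-\nu}{2}\pi+\delta-\bar\delta\bigr)$ from \eqref{eq:Was} then serves as an independent check on the sign of $W$ at $+\infty$.

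The step I expect to be the main obstacle is making the counting argument airtight uniformly in the order difference and reconciling it with \eqref{eq:Was}: the limiting value $L$ oscillates in sign as $|\nu-\mu|$ grows, so a naive two-point comparison of $\sgn W$ at the first extremum with $\sgn L$ fails to detect the root in roughly half of the ranges. The counting argument circumvents this because the magnitude $\bigl|\tfrac{\mu-\nu}{2}+c_0\bigr|$ grows monotonically past $1$ and never returns; but one must argue carefully that the integer-valued count genuinely overshoots $1$ (rather than merely approaching a non-integer limit), and, in case b), decide in each subcase whether the forced double gap occurs among the first few zeros --- driven by $y_{\mu,1}<j_{\nu,1}$ via Theorem~\ref{thm:cont} --- or only asymptotically. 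This bookkeeping near the origin, together with the sign normalization, is where the real work lies.
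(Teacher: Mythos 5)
Your route is genuinely different from the paper's (the paper fixes the sign of $W_{\xi_\nu,\bar\xi_\mu}$ at its first extremum via \eqref{C0} and \eqref{extrW}, compares it with the limit \eqref{eq:Was}, and then reaches the ``uncovered'' ranges of $\nu-\mu$ by monotonicity of the zeros in the order, Theorem~\ref{thm:cont}), and your counting idea would, if completed, even have the advantage of covering all of case a) in one stroke. But the step that carries the whole proof is asserted rather than proved: $N_{C_\nu}(x)-N_{\bar C_\mu}(x)=\tfrac{\mu-\nu}{2}+c_0+o(1)$ does \emph{not} follow from \eqref{eq:Basy}. The large-$x$ form of the functions fixes the asymptotic spacing and the relative phase of the zeros, but not the absolute integer offset between the two counting functions; that offset is governed by the McMahon-type asymptotics of the $s$-th zero with consistent indexing ($j_{\nu,s}\sim(s+\nu/2-1/4)\pi$, $y_{\mu,s}\sim(s+\mu/2-3/4)\pi$, and the analogue for a general common $\delta$ in case a)), i.e.\ by information reaching down to $x=0$, not by the asymptotic wave form alone. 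Moreover an integer-valued quantity cannot literally equal a non-integer constant plus $o(1)$; the usable statement is that the difference is eventually confined to the two integers bracketing the limit, which does give $|N_{C_\nu}-N_{\bar C_\mu}|\geq 2$ somewhere when the limit exceeds $1$ in modulus. These repairs are available, but as written the central claim of your case a) is unsupported. (Also, the detour through Lemma~\ref{lem:WR} is circular: a ``double gap'' is already, by definition, a failure of interlacing, so once you exhibit one the Wronskian is not needed.)

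Case b) is where the proposal actually fails. Your criterion $\bigl|\tfrac{\mu-\nu}{2}-\tfrac12\bigr|>1$ yields only $\nu-\mu>1$ (and $\mu-\nu>3$), and you then identify $\nu-\mu>1$ with the hypothesis $y_{\mu,1}<j_{\nu,1}$. That is backwards: for $\mu\leq\nu+1$ the hypothesis is automatic ($y_{\mu,1}\leq y_{\nu+1,1}<j_{\nu,1}$ by Theorems~\ref{thm:cont} and \ref{thm:1}), so its real content is precisely to admit the regime $\mu>\nu+1$ --- e.g.\ $\mu=\nu+2$ with $\nu$ moderately large, where $y_{\mu,1}<j_{\nu,1}$ still holds and the lemma claims breakdown. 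In that regime your mechanism never fires: the limiting value of the count is $\tfrac{\mu-\nu}{2}-\tfrac12\in(0,1)$, so the difference never reaches modulus $2$; the breakdown comes instead from a sign change of the \emph{bounded} count --- the proviso forces $N_{J_\nu}-N_{Y_\mu}=-1$ just after $y_{\mu,1}$, while the asymptotic offset forces the value $+1$ infinitely often, and no strictly alternating pair of sequences can produce both. This is exactly the interplay the paper encodes as ``sign of the first extremum versus sign of \eqref{eq:Was}'', and it is absent from your argument; you gesture at bookkeeping near the origin but supply no mechanism. So as it stands your proof covers case a) only modulo the unproved counting asymptotics, and does not establish case b) on the full claimed range $|\nu-\mu|>1$ with the stated proviso.
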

\begin{proof}
a. The proof is elementary in view of Lemma~\ref{lem:WR}. One only needs to show that the Wronskian associated to the given cylinder functions has at least one zero between its first extremum and infinity.

From Eq. (\ref{C0}) it follows, that independently of $\nu$ $\sgn C_\nu(0+)=\sgn \sin\delta$. Let $\mu<\nu$. Since $\sgn C_\nu(0+)=\sgn C_\mu(0+)$ and $c_{\mu,1}<c_{\nu,1}$ the first extremum of $W_{\xi_\nu,\bar\xi_\mu}(x)$ is positive. (For $\sin\delta=0$ we have two Bessel functions of the first kind and $\sgn J_\nu(0+)=\sgn J_\mu(0+)$ still holds.)
In Eq. (\ref{eq:Was}) we have $\delta-\bar\delta=0$, thus if $4k<\nu-\mu<2+4k$ ($k\in\mathbb{Z}^+$) the Wronskian is positive at the first extremum and negative at infinity, which assumes an odd number of zeros on this interval. By Lemma~\ref{lem:WR} in this case $C_\nu(x)$ and $C_\mu(x)$ are not interlaced.

It is easy to see now that by increasing $\nu$ (to reach the uncovered regions of the previous argumentation) the interlacing is not recovered. Let $S>0$ be such that for $n<S$ $c_{\mu,n}<c_{\nu,n}<c_{\mu,n+1}$ but $c_{\mu,S}<c_{\nu,S}<c_{\mu,S+1}<c_{\mu,S+2}<c_{\nu,S+1}$, i.e. only the first $S$ zeros of $C_\nu(x)$ and $C_\mu(x)$ are interlaced. Because of Theorem~\ref{thm:cont} interlacing cannot be recovered by increasing $\nu$ ($c_{\mu,S+2}<c_{\nu,S+1}<c_{\nu+\varepsilon,S+1}$, $\forall\varepsilon>0$).

b.~(This part was already proven in \cite{PA1}; however, in a more complicated way.) Let $\mu<\nu$. In this case the first extremum is negative since $\sgn J_\nu(0+)=-\sgn Y_\mu(0+)$, while $\delta-\bar\delta=-\frac{\pi}{2}$ in Eq. (\ref{eq:Was}) implies for $1+4k<\nu-\mu<3+4k$ ($k\in\mathbb{Z}^+$) that the Wronskian converges to a positive number. Therefore the Wronskian has at least one zero. For the uncovered regions of $|\mu-\nu|>1$ the same kind of reasoning works as the one we used in case a.
\end{proof}

From the proof one can see that "shifted interlacing" occurs on every $(a,b)$ intervals where $W_{\xi_\nu,\xi_\mu}(x)$ has no zeros. By "shifted interlacing" we mean $c_{\mu,s}<c_{\nu,s+d}<c_{\mu,s}$ for $s=s_1,s_2,\ldots, s_n$ with some fixed $d\neq 0$ shift (ordinary interlacing is defined by $d=0$). Especially important is the interval $(z,\infty)$ with $z$ being the greatest zero of the Wronskian.

\begin{lemma}\label{lem:BDD}
Let $\nu,\ \mu>0$. Then the interlacing of $C'_\nu(x)$ and $C'_\mu(x)$ breaks down for $|\nu-\mu|>2$, either $C\equiv J$ or $C\equiv Y$.
\end{lemma}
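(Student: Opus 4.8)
The plan is to reproduce, for the primed functions, the Wronskian mechanism that established Lemma~\ref{lem:BD}, since the phrase ``the same kind of reasoning'' invoked in Corollary~\ref{lem:inter2} applies here as well. Assume without loss of generality $\mu<\nu$ and take $C\equiv J$ (the case $C\equiv Y$ is identical, only the small-$x$ sign bookkeeping below changes). The obstruction is that $C'_\nu$ is \emph{not} itself a cylinder function, so Lemma~\ref{lem:WR} does not apply verbatim; I first build its analogue. Differentiating the Bessel equation and eliminating $C_\nu$ shows that $p=C'_\nu$ solves
\[
x^2(x^2-\nu^2)\,p''+x(x^2-3\nu^2)\,p'+\left[(x^2-\nu^2)^2-x^2-\nu^2\right]p=0 .
\]
The substitution $u_\nu=\frac{x^{3/2}}{\sqrt{x^2-\nu^2}}\,C'_\nu(x)$ removes the first-derivative term and puts this into Schr\"odinger form $u_\nu''+Q_\nu u_\nu=0$ on $(\nu,\infty)$, with $u_\nu$ sharing the positive zeros of $C'_\nu$. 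Hence $W(u_\nu,u_\mu)'=(Q_\nu-Q_\mu)\,u_\nu u_\mu$, so the extrema of this Wronskian again sit exactly at the points $\{c'_{\nu,s}\}\cup\{c'_{\mu,s}\}$, reproducing the structure of Eqs.~(\ref{derW})--(\ref{extrW}).

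The two sign inputs of the argument are then obtained as in Lemma~\ref{lem:BD}. First, viewing $Q_\nu$ as a function of $\nu^2$, a direct computation gives $\partial Q_\nu/\partial(\nu^2)=-x^{-2}-2(2x^2+\nu^2)(x^2-\nu^2)^{-3}<0$ for $x>\nu$, so $Q_\nu-Q_\mu$ has constant (negative) sign on $(\nu,\infty)$; this is exactly what an analogue of Lemma~\ref{lem:WR} requires, yielding ``no zeros of $W$ beyond its first extremum $\Leftrightarrow$ interlacing''. Second, from the asymptotics (\ref{eq:Basy}) one gets $u_\nu\sim-\sqrt{2/\pi}\,\sin(x-\nu\pi/2-\pi/4)$, whence $W(u_\nu,u_\mu)\to\frac{2}{\pi}\sin\!\big(\frac{\mu-\nu}{2}\pi\big)$, the exact counterpart of (\ref{eq:Was}) with $\delta=\bar\delta=0$. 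The sign of the first relevant extremum is anchored by the small-$x$ behaviour $C'_\mu(0^+)>0$ (for $J$, $J'_\mu\sim\frac12\Gamma(\mu)^{-1}(x/2)^{\mu-1}$, with the analogous constant sign for $Y'$). For $\nu-\mu$ just above $2$ these two signs disagree, forcing a zero of $W$ on $(\nu,\infty)$ and hence non-interlacing; the remaining $\nu-\mu>2$ then follow as in Lemma~\ref{lem:BD}a, since by Theorem~\ref{thm:cont} increasing $\nu$ cannot restore a lost interlacing. I expect the genuine difficulty to be the factor $\sqrt{x^2-\nu^2}$: it makes $u_\nu$ singular at $x=\nu$, and when $\nu-\mu$ is large some zeros $c'_{\mu,s}$ fall below $\nu$, so one must work on $(\nu,\infty)$ and carefully track the parity of the first extremum there rather than literally the first positive extremum.

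A cleaner, self-contained alternative --- which I would ultimately prefer --- avoids the singular normalization altogether and settles all $\nu-\mu>2$ and both $J,Y$ at once by counting. The McMahon asymptotics give $c'_{\nu,s}=(s+\tfrac{\nu}{2}+\gamma)\pi+o(1)$, with $\gamma=-\tfrac14$ for $J'$ and $\gamma=+\tfrac14$ for $Y'$, so writing $N_\bullet(X)$ for the number of positive zeros below $X$ one has $N_\mu(X)-N_\nu(X)=\big\lfloor X/\pi-\tfrac{\mu}{2}-\gamma\big\rfloor-\big\lfloor X/\pi-\tfrac{\nu}{2}-\gamma\big\rfloor+O(1)$, which attains the value $\lceil(\nu-\mu)/2\rceil$ for arbitrarily large $X$. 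Interlacing of the positive zeros forces strict alternation, hence $|N_\mu(X)-N_\nu(X)|\le1$ for every $X$; when $(\nu-\mu)/2>1$, i.e. $\lceil(\nu-\mu)/2\rceil\ge2$, this is violated for suitable large $X$, so the zeros cannot interlace. Here the only point needing care is that the $o(1)$ corrections in the zero asymptotics do not disturb the eventual strict inequality, which is immediate from the monotone McMahon expansion.
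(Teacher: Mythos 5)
Your preferred (counting) argument is correct, and it takes a genuinely different route from the paper. The paper never builds a Wronskian theory for the derivative functions: it uses the recurrence $C'_\nu(x)=-C_{\nu+1}(x)+\frac{\nu}{x}C_\nu(x)$ to identify the zeros of $C'_\nu$ asymptotically with those of $C_{\nu+1}$ (via the interlacing of Theorem~\ref{thm:1} and the asymptotic separation of zeros), and then transfers the shifted interlacing of $C_{\nu+1}$ and $C_{\mu+1}$ already established in Lemma~\ref{lem:BD}a to the primed functions, handling the remaining ranges of $\nu-\mu$ exactly as in Lemma~\ref{lem:BD}. Your McMahon-counting argument instead notes that interlacing forces the two zero-counting functions to differ by at most one at every threshold, while the phase shift $c'_{\nu,s}-c'_{\mu,s}\to\frac{(\nu-\mu)\pi}{2}$ produces a discrepancy of at least two once $\nu-\mu>2$; this settles $J'$ and $Y'$ and all $\nu-\mu>2$ simultaneously, with no case splitting over $4k<K<2+4k$, no appeal to Lemma~\ref{lem:BD}, and with rigor at least comparable to the paper's somewhat informal ``asymptotic identification''. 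Two caveats. First, your McMahon constants are off by $\tfrac12$ (the standard leading phases are $\gamma=-\tfrac34$ for $J'$ and $\gamma=-\tfrac14$ for $Y'$), but this is immaterial because only the difference of phases of two functions of the same kind enters the count. Second, your Liouville-transform route is not complete as sketched: the normalization $u_\nu=x^{3/2}(x^2-\nu^2)^{-1/2}C'_\nu(x)$ is singular at $x=\nu$, and zeros of $C'_\mu$ can fall below $\nu$ already for $\nu-\mu$ slightly above $2$, so the sign of the first extremum of $W(u_\nu,u_\mu)$ on $(\nu,\infty)$ is not anchored by the small-$x$ behaviour of $C'_\mu$ --- precisely the parity issue you flag; the proof should therefore rest on the counting argument (or on the paper's reduction to Lemma~\ref{lem:BD}), not on that sketch.
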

\begin{proof}
Let $\mu<\nu$. Using Lemma~\ref{lem:BD}a and the recurrence relation
\begin{equation}\label{eq:rec}
C'_\nu(x)=-C_{\nu+1}(x)+\frac{\nu}{x}C_\nu(x)
\end{equation}
I will show that the interlacing
\begin{equation}
c'_{\mu,1}<c'_{\nu,1}<c'_{\mu,2}<\ldots
\end{equation}
is certainly broken for $|\nu-\mu|>2$.

From the recurrence relation (\ref{eq:rec}) we infer that the zeros of $C'_\nu(x)$ converge to those of $C_{\nu+1}(x)$, moreover they can be identified with one another since $C'_\nu(x)$ and $C_{\nu+1}(x)$ are interlaced (Theorem~\ref{thm:1}) and also the zeros of both functions are well separated asymptotically (see Eq. (\ref{eq:Basy})). That is either $c_{\nu+1,s}\approx c'_{\nu,s}$ or $c_{\nu+1,s}\approx c'_{\nu,s+1}$ for big $s$'s.

Let now $\nu=\mu+2+K$ with some $4k<K<2+4k$ ($k\in\mathbb{Z}^+$). Then the Wronskian $W_{\xi_{\nu+1},\xi_{\mu+1}}(x)$ has an odd number of zeros implying for the zeros of $C_{\nu+1}(x)$ and $C_{\mu+1}(x)$ shifted interlacing on $(z,\infty)$. Because of the asymptotic identification between $C'_\nu(x)$ and $C_{\nu+1}(x)$ the shifted interlacing, that is a broken interlacing, also holds for the zeros of $C'_\nu(x)$ (with perhaps a different threshold index).

For the uncovered regions of $2+4k<K<4+4k$ ($k\in\mathbb{Z}^+$) the same kind of argument works that was used in Lemma~\ref{lem:BD}.
\end{proof}

In summary, the combination of Lemma~\ref{lem:BD} and Corollary~\ref{cor} yields the first part of Theorem~\ref{THM1} while Lemma~\ref{lem:BDD} and Corollary~\ref{cor} gives the second part.
 
\bibliographystyle{plain}
\bibliography{cyl}

\end{document}